\ifdefined\pdfoutput
  \pdfoutput=1
\fi

\documentclass[11pt,a4paper]{article}

\usepackage[T1]{fontenc}
\usepackage[utf8]{inputenc}
\usepackage[english]{babel}
\usepackage{lmodern}
\usepackage[final,protrusion=true,expansion=true]{microtype}
\usepackage[
  a4paper,
  top=2.25cm,
  bottom=2.25cm,
  left=2.65cm,
  right=2.65cm,
  footskip=28pt,
  marginparwidth=1.75cm
]{geometry}

\usepackage{setspace}
\usepackage{amsmath,amssymb,amsfonts,amsthm,mathtools}
\usepackage{etoolbox}
\usepackage{bm}
\usepackage{bbm}
\allowdisplaybreaks
\numberwithin{equation}{section}
\mathtoolsset{showonlyrefs=false}

\usepackage{xcolor}
\usepackage{graphicx}
\graphicspath{{figures/}{images/}}
\usepackage{booktabs}
\usepackage{array}
\usepackage{adjustbox}
\usepackage{caption}
\usepackage{subcaption}
\usepackage{float}
\usepackage[section]{placeins}

\usepackage{tikz}
\usetikzlibrary{
  arrows.meta,
  calc,
  decorations.markings,
  decorations.pathreplacing,
  positioning
}
\tikzset{
  vtx/.style={
    circle,
    fill=black,
    draw=black,
    inner sep=2pt
  },
  gluevtx/.style={
    circle,
    fill=red,
    draw=red,
    inner sep=2.15pt
  },
  ed/.style={
    blue,
    line width=1.15pt
  },
  every picture/.style={
    line cap=round,
    line join=round
  }
}

\usepackage{algorithm}
\usepackage{algpseudocode}
\floatname{algorithm}{Algorithm}

\usepackage{enumitem}
\setlist[itemize]{
  leftmargin=2.2em,
  itemsep=0.15em,
  topsep=0.35em
}
\setlist[enumerate]{
  leftmargin=2.2em,
  itemsep=0.15em,
  topsep=0.35em
}

\usepackage{authblk}

\usepackage{titlesec}

\titleformat{\section}
  {\large\bfseries}
  {\thesection.}
  {0.65em}
  {}

\titleformat{\subsection}
  {\normalsize\bfseries}
  {\thesubsection.}
  {0.65em}
  {}

\titleformat{\subsubsection}
  {\normalsize\itshape}
  {\thesubsubsection.}
  {0.65em}
  {}

\titlespacing*{\section}
  {0pt}
  {2.4ex plus .4ex minus .2ex}
  {1.1ex}

\titlespacing*{\subsection}
  {0pt}
  {1.9ex plus .3ex minus .2ex}
  {0.8ex}

\titlespacing*{\subsubsection}
  {0pt}
  {1.5ex plus .2ex minus .2ex}
  {0.6ex}

\usepackage{aliascnt}

\theoremstyle{plain}

\newtheorem{theorem}{Theorem}[section]

\newaliascnt{lemma}{theorem}
\newtheorem{lemma}[lemma]{Lemma}
\aliascntresetthe{lemma}

\newaliascnt{question}{theorem}

\aliascntresetthe{question}

\newaliascnt{proposition}{theorem}

\aliascntresetthe{proposition}

\newaliascnt{corollary}{theorem}
\newtheorem{corollary}[corollary]{Corollary}
\aliascntresetthe{corollary}

\newaliascnt{conjecture}{theorem}
\newtheorem{conjecture}[conjecture]{Conjecture}
\aliascntresetthe{conjecture}

\newaliascnt{problem}{theorem}
\newtheorem{problem}[problem]{Problem}
\aliascntresetthe{problem}

\newaliascnt{observation}{theorem}

\aliascntresetthe{observation}

\newenvironment{subproof}[1][Proof of the claim]{%
  \par\noindent\textit{#1. }%
}{\hfill$\Box$\par}

\newcounter{proofgroup}

\newtheoremstyle{indentedplain}
  {\topsep}       
  {\topsep}       
  {\itshape}      
  {\parindent}    
  {\bfseries}     
  {.}             
  {0.5em}         
  {}              

\theoremstyle{indentedplain}

\newtheorem{claim}{Claim}[proofgroup]

\AtBeginEnvironment{proof}{\stepcounter{proofgroup}}

\theoremstyle{definition}

\newaliascnt{definition}{theorem}

\aliascntresetthe{definition}

\newaliascnt{example}{theorem}

\aliascntresetthe{example}

\theoremstyle{remark}

\newaliascnt{remark}{theorem}

\aliascntresetthe{remark}

\newaliascnt{fact}{theorem}

\aliascntresetthe{fact}

\makeatletter
\expandafter\patchcmd\csname\string\proof\endcsname
  {\itshape}
  {\bfseries\upshape}
  {}
  {\PackageWarning{template}{Could not make the proof heading bold}}
\makeatother

\usepackage[numbers,sort&compress]{natbib}
\definecolor{linkblue}{RGB}{0,70,150}
\definecolor{citepurple}{RGB}{82,20,160}

\usepackage[
  colorlinks=true,
  linkcolor=linkblue,
  citecolor=citepurple,
  urlcolor=black,
  bookmarksnumbered=true,
  bookmarksopen=true
]{hyperref}

\usepackage{bookmark}
\usepackage[nameinlink,noabbrev,capitalise]{cleveref}

\crefname{theorem}{Theorem}{Theorems}
\crefname{lemma}{Lemma}{Lemmas}
\crefname{proposition}{Proposition}{Propositions}
\crefname{corollary}{Corollary}{Corollaries}
\crefname{conjecture}{Conjecture}{Conjectures}
\crefname{problem}{Problem}{Problems}
\crefname{observation}{Observation}{Observations}
\crefname{claim}{Claim}{Claims}
\crefname{case}{Case}{Cases}
\crefname{definition}{Definition}{Definitions}
\crefname{example}{Example}{Examples}
\crefname{remark}{Remark}{Remarks}
\crefname{question}{Question}{Questions}
\crefname{fact}{Fact}{Facts}
\crefname{equation}{equation}{equations}
\crefname{figure}{Figure}{Figures}
\crefname{table}{Table}{Tables}
\crefname{algorithm}{Algorithm}{Algorithms}

\Crefname{equation}{Equation}{Equations}

\title{
  \bfseries\boldmath
  Optimal coloring of
  $\{\mathrm{cap},\mathrm{even\ hole}\}$-free graphs
  with no short odd holes
}

\author{
  Feng Liu\thanks{
    Email:
    \href{mailto:liufeng0609@126.com}
    {liufeng0609@126.com}.
  }
}

\author{
  Shuang Sun\thanks{
    Email:
    \href{mailto:chocolatesun@sjtu.edu.cn}
    {chocolatesun@sjtu.edu.cn}~(corresponding author).
  }
}

\author{
  Yan Wang\thanks{
    Email:
    \href{mailto:yan.w@sjtu.edu.cn}
    {yan.w@sjtu.edu.cn}.
  }
}

\affil{
  School of Mathematical Sciences,
  Shanghai Jiao Tong University,
  Shanghai 200240, China
}

\date{}

\begin{document}

\maketitle

\begin{abstract}
A \emph{hole} is an induced cycle of length at least four, and an
\emph{even hole} is a hole of even length. A \emph{cap} is obtained
from a hole by adding a vertex adjacent to exactly two consecutive
vertices of the hole. Chen, Xu, and Xu proved that every
$\{\mathrm{cap},\mathrm{even\ hole}\}$-free graph $G$ satisfies
$\chi(G)\leq \left\lceil\frac{5}{4}\omega(G)\right\rceil$, and improved
this bound to
$\chi(G)\leq \left\lceil\frac{7}{6}\omega(G)\right\rceil$ when
$5$-holes are also excluded. They asked whether, for every integer
$q\geq3$, every $\{\mathrm{cap},\mathrm{even\ hole}\}$-free graph $G$
with no odd hole of length at most $2q-1$ satisfies
$$
\chi(G)\leq
\left\lceil\frac{2q+1}{2q}\omega(G)\right\rceil.
$$
We answer this question affirmatively and show that the bound is sharp
for every $q\geq3$.

\smallskip

\noindent\textbf{Keywords.} Chromatic number, clique number,
$\chi$-boundedness, cap-free graph, even-hole-free graph.

\noindent\textbf{2020 Mathematics Subject Classification.} 05C15, 05C75.
\end{abstract}

\section{Introduction}

All graphs considered in this paper are finite and simple. We use standard graph-theoretic terminology and notation; see \cite{Bondy2008,West1996}. For a positive integer $k$, let $[k]=\{1,\ldots,k\}$. A proper $k$-coloring of a graph $G$ is a map $\phi\colon V(G)\to [k]$ such that $\phi(u)\ne\phi(v)$ whenever $uv\in E(G)$. The chromatic number $\chi(G)$ is the least integer $k$ for which $G$ has a proper $k$-coloring, and the clique number $\omega(G)$ is the maximum size of a clique in $G$.

For a graph $H$, a graph $G$ is \emph{$H$-free} if it contains no induced subgraph isomorphic to $H$. More generally, for a family $\mathcal F$ of graphs, $G$ is \emph{$\mathcal F$-free} if it is $F$-free for every $F\in\mathcal F$. A hereditary class $\mathcal G$ is \emph{$\chi$-bounded} if there exists a function $f$ such that $\chi(G)\leq f(\omega(G))$ for every $G\in\mathcal G$. Such a function is called a \emph{$\chi$-binding function} for $\mathcal G$. A central problem in this area is to determine which classes of $\mathcal F$-free graphs are $\chi$-bounded and, when they are, to find the best possible $\chi$-binding function. This problem was initiated by Gy\'arf\'as~\cite{Gya75}; see also the survey of Scott and Seymour~\cite{ScottSeymourSurvey}.

Perfect graphs are a classical example of a $\chi$-bounded class. A graph $G$ is \emph{perfect} if $\chi(H)=\omega(H)$ for every induced subgraph $H$ of $G$. A \emph{hole} is an induced cycle of length at least four, and it is \emph{even} or \emph{odd} according to its length. An \emph{antihole} is the complement of a hole. The Strong Perfect Graph Theorem due to Chudnovsky, Robertson, Seymour, and Thomas~\cite{SPGT} states that a graph is perfect if and only if it contains no odd hole and no odd antihole. This characterization places holes at the center of the study of $\chi$-boundedness.

A classical family of graphs obtained by excluding holes is the class of hole-free graphs, also known as chordal graphs. Dirac~\cite{Dir61} proved that every chordal graph has a vertex whose neighbours are pairwise adjacent. Applying this result to every induced subgraph shows that chordal graphs are perfect. Consequently, every hole-free graph $G$ satisfies $\chi(G)=\omega(G)$.

Substantial progress has also been made for classes in which only certain holes are excluded. Scott and Seymour~\cite{SS16} proved that every odd-hole-free graph $G$ satisfies $\chi(G)\leq \frac{2^{2^{\omega(G)+1}}}{48(\omega(G)+1)}$. More generally, Chudnovsky, Scott, and Seymour~\cite{CSS17} proved that, for every integer $\ell\geq 4$, the class of graphs with no hole of length at least $\ell$ is $\chi$-bounded. Chudnovsky, Scott, Seymour, and Spirkl~\cite{CSSS20} later proved that the class of graphs with no odd hole of length at least $\ell$ is also $\chi$-bounded. These results settled three long-standing conjectures of Gy\'arf\'as~\cite{Gya87}.

Another natural restriction is to require all holes to have the same length. For an integer $\ell\geq 4$, a graph is \emph{$\ell$-holed} if every hole in the graph has length $\ell$. There has been study of 4-holed graphs. 
Sivaraman~\cite{Sivaraman2018} proved that if every hole of a graph $G$ has length $4$, then $\chi(G)\le 2^{2^{\omega(G)}}$.
He also
conjectured that every 4-holed graph $G$ satisfies
$\chi(G)\le \omega(G)^2$, and recorded an improvement of Seymour
to the bound $\chi(G)\le 2^{\omega(G)^2}$.
Cook, Horsfield, Preissmann, Robin, Seymour,
Sintiari, Trotignon, and Vu\v{s}kovi\'c~\cite{CookHPRSSTV24} gave a structural characterization
of $\ell$-holed graphs for every $\ell\ge7$.
Wang and Wu~\cite{WW25} proved that, for every odd integer $\ell\geq 7$, every $\ell$-holed graph $G$ satisfies $\chi(G)\leq \left\lceil\frac{\ell}{\ell-1}\omega(G)\right\rceil$.

There have also been much study on even-hole-free graphs. In contrast to the double-exponential bound known for odd-hole-free graphs, even-hole-free graphs admit a linear $\chi$-binding function. Chudnovsky and Seymour~\cite{CS23} proved that every even-hole-free graph has a vertex whose neighborhood can be covered by two cliques. It follows that every even-hole-free graph $G$ satisfies $\chi(G)\leq 2\omega(G)-1$. However, whether this bound is best possible remains open. Huang, Zhou, and Chang~\cite{HZC26} proposed the following conjectural bound.

\begin{conjecture}[Huang--Zhou--Chang~\cite{HZC26}]
Every even-hole-free graph $G$ satisfies $\chi(G)\leq \left\lceil\frac{5}{4}\omega(G)\right\rceil$.
\end{conjecture}

Several subclasses of even-hole-free graphs have been studied. 
A \emph{pan} is obtained from a hole by adding a pendant edge; see Figure~\ref{fig:specialgraphs}. 
Cameron, Chaplick, and Ho\`ang~\cite{CCH18} proved that every $\{\mathrm{pan},\mathrm{even\ hole}\}$-free graph $G$ satisfies $\chi(G)\leq \left\lceil\frac{3}{2}\omega(G)\right\rceil$. A \emph{diamond} is obtained from $K_4$ by deleting one edge; see Figure~\ref{fig:specialgraphs}. Kloks, M\"uller, and Vu\v{s}kovi\'c~\cite{KMV09} proved that every $\{\mathrm{diamond},\mathrm{even\ hole}\}$-free graph $G$ satisfies $\chi(G)\leq \omega(G)+1$.
For a positive integer $t$, let $P_t$ denote the path on $t$ vertices. Karthick and Maffray~\cite{KM19} proved that every $\{P_6,\mathrm{even\ hole}\}$-free graph $G$ satisfies $\chi(G)\leq \left\lceil\frac{5}{4}\omega(G)\right\rceil$. More recently, Huang, Zhou, and Chang~\cite{HZC26} extended this result by proving the same bound for $\{P_7,\mathrm{even\ hole}\}$-free graphs. For more background on even-hole-free graphs, we refer to the survey of Vu\v{s}kovi\'c~\cite{Vus10}.

\begin{figure}[htbp]
    \centering
    \tikzstyle{v}=[circle, draw, fill=black, inner sep=0pt, minimum size=5pt]

    \begin{tikzpicture}[scale=1.0, baseline=(current bounding box.center)]
        \node[v] (a1) at (0,1.25) {};
        \node[v] (a2) at (1.2,0.4) {};
        \node[v] (a3) at (0.75,-0.95) {};
        \node[v] (a4) at (-0.75,-0.95) {};
        \node[v] (a5) at (-1.2,0.4) {};
        \node[v] (a6) at (0,2.15) {};
        \draw[blue, line width=0.9pt] (a1)--(a2)--(a3)--(a4)--(a5)--(a1);
        \draw[blue, line width=0.9pt] (a1)--(a6);
    \end{tikzpicture}
    \hspace{1.5cm}
    \begin{tikzpicture}[scale=1.0, baseline=(current bounding box.center)]
        \node[v] (c1) at (0,1.2) {};
        \node[v] (c2) at (1.05,0) {};
        \node[v] (c3) at (0,-1.2) {};
        \node[v] (c4) at (-1.05,0) {};
        \draw[blue, line width=0.9pt] (c1)--(c2)--(c3)--(c4)--(c1);
        \draw[blue, line width=0.9pt] (c1)--(c3);
    \end{tikzpicture}
    \hspace{1.5cm}
    \begin{tikzpicture}[scale=1.0, baseline=(current bounding box.center)]
        \node[v] (b1) at (0,1.25) {};
        \node[v] (b2) at (1.2,0.4) {};
        \node[v] (b3) at (0.75,-0.95) {};
        \node[v] (b4) at (-0.75,-0.95) {};
        \node[v] (b5) at (-1.2,0.4) {};
        \node[v] (b6) at (0,2.15) {};
        \draw[blue, line width=0.9pt] (b1)--(b2)--(b3)--(b4)--(b5)--(b1);
        \draw[blue, line width=0.9pt] (b6)--(b5);
        \draw[blue, line width=0.9pt] (b6)--(b1);
    \end{tikzpicture}

    \caption{From left to right: pan, diamond and cap.}
    \label{fig:specialgraphs}
\end{figure}

A \emph{cap} is obtained from a hole by adding a vertex adjacent to exactly two consecutive vertices of the hole; see Figure~\ref{fig:specialgraphs}. Cameron, da Silva, Huang, and Vu\v{s}kovi\'c~\cite{CSHV18} proved that every $\{\mathrm{cap},\mathrm{even\ hole}\}$-free graph $G$ satisfies $\chi(G)\leq \left\lceil\frac{3}{2}\omega(G)\right\rceil$, and asked whether the coefficient $\frac{3}{2}$ could be improved to $\frac{5}{4}$. Chen, Xu, and Xu~\cite{CXX} answered this question affirmatively by proving that every $\{\mathrm{cap},\mathrm{even\ hole}\}$-free graph $G$ satisfies $\chi(G)\leq \left\lceil\frac{5}{4}\omega(G)\right\rceil$. They further proved that the stronger bound $\chi(G)\leq \left\lceil\frac{7}{6}\omega(G)\right\rceil$ holds when $5$-holes are also excluded. The two bounds above correspond to the first two cases of a natural generalization. Motivated by this observation, Chen, Xu, and Xu~\cite{CXX} proposed the following problem.

\begin{problem}[Chen--Xu--Xu~\cite{CXX}]\label{problem:cxx}
Let $q\geq 3$ be an integer. Does every $\{\mathrm{cap},\mathrm{even\ hole}\}$-free graph $G$ with no odd hole of length at most $2q-1$ satisfy

$$
\chi(G)\leq \left\lceil\frac{2q+1}{2q}\omega(G)\right\rceil?
$$
\end{problem}

In this paper, we answer Problem~\ref{problem:cxx} affirmatively. 

\begin{theorem}\label{thm:main}
Let $q\geq 3$ be an integer. If $G$ is a $\{\mathrm{cap},\mathrm{even\ hole}\}$-free graph with no odd hole of length at most $2q-1$, then

$$
\chi(G)\leq \left\lceil\frac{2q+1}{2q}\omega(G)\right\rceil.
$$
Moreover, the bound is sharp for every $q\geq 3$.
\end{theorem}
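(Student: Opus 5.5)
We will follow the structure-theorem route used by Cameron, da Silva, Huang, and Vu\v{s}kovi\'c and by Chen, Xu, and Xu. The structure theorem of Conforti, Cornu\'ejols, Kapoor, and Vu\v{s}kovi\'c for cap-free graphs, specialised to even-hole-free graphs in~\cite{CSHV18}, says the following. Every $\{\mathrm{cap},\mathrm{even\ hole}\}$-free graph either is \emph{triangle-free}, or has a \emph{clique cutset}, or arises from a triangle-free graph with no hole of length at most $2q-1$ by blowing up vertices into cliques in a controlled way.

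We argue by a minimal counterexample $G$. Clique cutsets are harmless, since $\chi$ of a clique-cutset decomposition is the maximum over the pieces and $\omega$ does not increase. So we may assume $G$ has no clique cutset. In that case the structure theorem gives that $G$ is obtained from a \emph{basic} graph $H$ by substituting cliques $K_v$ for vertices $v$ of a 2-connected triangle-free graph $H$, with $H$ possibly plus universal vertices (a universal clique $U$, which costs exactly $|U|$ in both $\chi$ and $\omega$, so it too can be stripped). The no-short-odd-hole and even-hole-free hypotheses transfer to $H$. Thus $H$ is a triangle-free graph in which every cycle that is a hole is odd of length at least $2q+1$. Equivalently, $H$ has girth at least $2q+1$ and no even hole. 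In particular every block of $H$ that is not an edge is an odd cycle of length at least $2q+1$, because a 2-connected graph that is not a cycle contains a theta, and a theta with all holes odd is impossible by parity.

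The main step is then a weighted coloring problem. We are given a graph $H$ whose 2-connected pieces are odd cycles $C$ of length $\ell\ge 2q+1$ or edges, and weights $w(v)=|K_v|$. We must color each $K_v$ with $w(v)$ colors so that adjacent cliques get disjoint color sets, using $k=\lceil\frac{2q+1}{2q}\omega\rceil$ colors. Here $\omega=\max_{uv\in E(H)}(w(u)+w(v))$.

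For a single odd cycle $C_\ell$ with weights, we expect the fractional/weighted chromatic number to be $\max\{\max_e w(e),\ \lceil 2W/(\ell-1)\rceil\}$, where $W=\sum w$, the standard formula for weighted odd cycles. The bound $W\le \ell\omega/2$ then gives
\[
\frac{2W}{\ell-1}\le \frac{\ell}{\ell-1}\,\omega\le\frac{2q+1}{2q}\,\omega .
\]
We would prove this by an explicit greedy/cyclic interval assignment around the cycle, reminiscent of Wang--Wu~\cite{WW25}.

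Gluing along cut vertices of $H$ is handled by permuting colors, since a cut vertex's clique is itself a clique cutset in $G$. In fact such a cut would already have been removed by minimality, so $H$ is a single odd cycle.

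The main obstacle is verifying the precise form of the structure theorem in the no-clique-cutset case: whether blow-ups also allow edges replaced by more complex joins (as in ring-like structures), and checking that the even-hole-free condition forces the skeleton to be a single long odd cycle. If the reduction is only to "$G$ is a blow-up of a hole of length $\ell\ge 2q+1$ plus universal clique," the cycle-coloring lemma above finishes it.

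For sharpness, take $C_{2q+1}$ and blow up each vertex into a clique of size $t$. This graph is cap-free and even-hole-free, and its holes have length $2q+1$. It has $\omega=2t$, and every color class contains at most $q$ vertices, so
\[
\chi\ge\left\lceil\frac{(2q+1)t}{q}\right\rceil=\left\lceil\frac{2q+1}{2q}\,\omega\right\rceil .
\]
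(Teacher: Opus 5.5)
\textbf{There is a genuine gap, and it is exactly where you suspected.} Your claim that every block of the triangle-free skeleton $H$ is an edge or an odd cycle is false.

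Your parity argument only shows that an even-hole-free graph has no \emph{induced} theta. A $2$-connected triangle-free graph that is not a cycle need not contain an induced theta; it can contain a wheel instead. Here is a concrete example.
\begin{itemize}
\item Let $n=2q-1$, take a hole $c_0c_1\ldots c_{3n-1}c_0$ of odd length $3n$, and add a vertex $v$ adjacent to $c_0,c_n,c_{2n}$.
\item The graph is triangle-free, hence cap-free.
\item Its holes are the big hole and three holes $v c_0 c_1\ldots c_n v$ (and the analogous two) of length $n+2=2q+1$. Any other cycle through $v$ contains the third neighbor of $v$, which gives a chord. So the graph is even-hole-free with no odd hole of length at most $2q-1$.
\item It has no clique cutset and is not a cycle.
\item Every nonempty clique blowup of it stays in the class, because holes of a blowup use at most one vertex per bag, and a cap in the blowup would force a triangle in the skeleton.
\end{itemize}
The theta on $c_0,c_n$ (short arc, path through $v$, long arc) is not induced because $vc_{2n}$ is a chord, so parity says nothing about it. Hence your minimal counterexample may well have such a skeleton. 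Your step ``such a cut would already have been removed, so $H$ is a single odd cycle'' fails, and the weighted odd-cycle formula does not apply.

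What your argument actually proves is only the case $d=0$ in the paper, where the skeleton $F$ is a single odd hole and Lemma~\ref{lem:odd-cycle-blowup} finishes. The missing idea is how to color blowups of skeletons whose good ear decomposition (Corollary~\ref{cor:structural-reduction}) has at least one ear; the example above is exactly one such ear added to a $(2q+1)$-hole. The paper handles this as follows.
\begin{itemize}
\item Take the last good ear $P=x_0\ldots x_m$, attached at consecutive vertices $x_0,y,x_m$.
\item Since $y$ has an odd number of neighbors on $P$, its first two neighbors $x_a,x_b$ after $x_0$ yield two odd holes of length at least $2q+1$ sharing the edge $yx_a$.
\item The interior bags $W=X_1\cup\cdots\cup X_{b-1}$ are anticomplete to the rest of $G$.
\item By minimality, $G-W$ is $k$-colorable. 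Its coloring of $Y\cup X_0\cup X_b$ is then extended across $W$ by Lemma~\ref{lem:two-odd-holes}.
\item That lemma works by choosing a common color set $B$ for the shared bag and then threading colors along each path via Lemma~\ref{lem:path-extension}. This is where $q\geq3$ is needed.
\end{itemize}
Nothing in your proposal plays this role. Your sharpness construction is fine and matches the paper's.
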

The remainder of the paper is organized as follows. Section~\ref{sec:prelim} collects the structural results and auxiliary lemmas used in the proof. Section~\ref{sec:ear-coloring} establishes the coloring extension lemmas for clique blowups of paths and clique blowups of two odd holes sharing an edge. 
We show Theorem~\ref{thm:main} and give a sharp construction in Section~\ref{sec:main-proof}.

\section{Preliminaries}\label{sec:prelim}

In this section, we collect the notation and auxiliary results used in
the proof of Theorem~\ref{thm:main}. A \emph{clique cutset} of a graph
$G$ is a clique $K$ such that $G-K$ has more components than $G$. A \emph{universal clique} of $G$ is a clique $K$ (possibly empty) that is complete to $V(G)\setminus K$.
 A graph $G$ is \emph{odd-signable} if
there exists a function $w:E(G)\to\{0,1\}$ such that
$\sum_{e\in E(C)}w(e)$ is odd for every induced cycle $C$ of $G$.
For two disjoint vertex sets $A$ and $B$, we say that $A$ is
\emph{complete to} $B$ if every vertex of $A$ is adjacent to every
vertex of $B$. We say that $A$ is \emph{anticomplete to} $B$ if no
vertex of $A$ is adjacent to a vertex of $B$.

The following structural result is important for the structure of
$\{\mathrm{cap},4\text{-}\mathrm{hole}\}$-free graphs.
It says that, after choosing a maximal triangle-free subgraph $F$, the
whole graph is obtained from $F$ by replacing vertices with cliques,
apart from a possible universal clique. We shall use it in
Corollary~\ref{cor:structural-reduction} to reduce the coloring
problem to a clique blowup of a triangle-free graph.

\begin{lemma}[Cameron--da Silva--Huang--Vu\v{s}kovi\'c~\cite{CSHV18}]\label{lem:cameron}
Let $G$ be a $\{\mathrm{cap},4\text{-}\mathrm{hole}\}$-free graph that
contains a hole and has no clique cutset. Let $F$ be a maximal
connected induced subgraph of $G$ with at least three vertices such
that $F$ is triangle-free and has no clique cutset. Then $G$ is
obtained from $F$ by replacing each vertex of $F$ with a nonempty
clique and then adding a universal clique.
\end{lemma}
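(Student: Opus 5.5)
The natural route is to analyze how a single vertex outside $F$ attaches to $F$, and then assemble the blowup. First I note that $F$ exists: $G$ contains a hole, and a hole is connected, triangle-free, has at least three vertices and has no clique cutset.

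\textbf{Attachments to a hole.} The basic tool is a classification of how a vertex $v\notin V(H)$ can see a hole $H$ of $G$. Cap-freeness excludes $N(v)\cap V(H)$ being exactly two consecutive vertices. If $v$ has two nonadjacent neighbours on $H$, consider a subpath of $H$ between consecutive neighbours of $v$. Together with $v$ it forms an induced cycle, which must be a triangle (not a $4$-hole, and not a longer hole, since that together with the rest of $H$ would yield a cap or a $4$-hole). Running through the cases, I expect to show that $N(v)\cap V(H)$ is empty, a single vertex, three consecutive vertices $x_{i-1}x_ix_{i+1}$, or all of $V(H)$.

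\textbf{From holes to $F$.} Since $F$ is triangle-free with no clique cutset and at least three vertices, it is $2$-connected, and every edge and every path of length two of $F$ lies in a hole of $F$. For $v\notin V(F)$ with a neighbour in $F$, I propagate the hole classification along overlapping holes and use connectivity of $F$. This should give exactly three possibilities: $N(v)\cap V(F)=V(F)$ ($v$ is universal to $F$); or $N(v)\cap V(F)=N_F[u]$ for a unique $u$ ($v$ is a clone of $u$); or $N(v)\cap V(F)$ is a single vertex, or a set with no two adjacent vertices. The last case must be excluded. If $N(v)\cap V(F)$ is independent with at least two vertices, then $F+v$ is triangle-free, connected and still has no clique cutset, contradicting the maximality of $F$. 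A single neighbour would give a pan-like configuration, which I would kill using the global absence of clique cutsets in $G$, as in the next step.

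\textbf{Vertices far from $F$ and assembling the blowup.} The main obstacle is to show that no vertex is anticomplete to $F$, and that no vertex has just one neighbour in $F$. Let $X_u$ consist of $u$ and its clones, and let $U$ be the set of vertices complete to $F$. Suppose $D$ is a component of $G-(V(F)\cup U\cup\bigcup_u X_u)$. Since $G$ has no clique cutset, the attachment of $D$ is not a clique. So I can choose a shortest path $P$ through $D$ whose ends see two nonadjacent vertices of $F\cup\bigcup X_u$. Replacing clones by their originals where necessary, $P$ together with $F$ either forms a larger connected triangle-free induced subgraph without clique cutset, contradicting maximality, or closes a hole on which some vertex attaches to exactly two consecutive vertices, giving a cap, or closes a $4$-hole.

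Once every vertex lies in $V(F)\cup U\cup\bigcup X_u$, the remaining facts follow from $C_4$- and cap-freeness applied to holes of $F$ through the relevant edges:
\begin{itemize}
\item each $X_u$ is a clique, since two nonadjacent clones of $u$ together with two nonadjacent neighbours of $u$ in $F$ would form a $4$-hole;
\item $X_u$ is complete to $X_w$ when $uw\in E(F)$ and anticomplete to $X_w$ otherwise;
\item $U$ is a clique complete to everything else.
\end{itemize}
These facts give exactly the claimed description of $G$ as $F$ with each vertex replaced by a nonempty clique, plus a universal clique.
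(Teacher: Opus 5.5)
The paper does not prove this lemma; it quotes it from Cameron--da Silva--Huang--Vu\v{s}kovi\'c, who build on the Conforti--Cornu\'ejols--Kapoor--Vu\v{s}kovi\'c decomposition of cap-free graphs. So your proposal has to stand on its own, and it does not. The two steps that carry the content are only announced (``I expect'', ``this should give''), and the local tools you offer for them are false.

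Your hole classification fails already for a $6$-hole $H=h_1\cdots h_6$ plus a vertex $v$ adjacent exactly to $h_1$ and $h_4$. This graph satisfies all the hypotheses: it is triangle-free (hence cap-free), its holes are two $5$-holes and $H$, and it has no clique cutset. Yet $N(v)\cap V(H)=\{h_1,h_4\}$. A long sector between consecutive neighbours of $v$ produces a cap only when an adjacent sector is short. If all sectors are long nothing is forced, so the correct list must also contain sets of vertices pairwise at distance at least $3$ on $H$.

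Likewise, ``every path of length two of $F$ lies in a hole of $F$'' is false. Let $c$ be adjacent to $u,w,d_1,d_2$, and add the paths $u\,a\,e_i\,d_i$ and $w\,b\,f_i\,d_i$ for $i=1,2$. This graph is triangle-free, has no $4$-hole and no clique cutset. But $\{c,d_1,d_2\}$ separates $u$ from $w$, so $ucw$ lies in no hole. Hence you have neither a proof nor a working method for the key propagation claims. These are that a vertex adjacent to exactly $a,u,b$ on one hole satisfies $N(v)\cap V(F)=N_F[u]$, and that a vertex complete to one hole is complete to $F$.

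The step you yourself call the main obstacle is showing that no vertex has at most one neighbour in $F$. You settle it in one sentence whose three outcomes (a larger subgraph, a cap, or a $4$-hole) are exactly what needs proof, and the device it relies on is not justified. Nothing you have established forces a vertex $p\in D$ adjacent to a clone $\alpha\in X_u$ to be adjacent to $u$, so you cannot simply ``replace clones by their originals''. If you pass instead to $F'=F-u+\alpha$, this is isomorphic to $F$ but not known to be maximal, so producing a larger triangle-free subgraph containing $F'$ contradicts nothing. Moreover, interior vertices of a shortest path $P$ through $D$ may have neighbours in $F$ (possibly shared by consecutive vertices of $P$) or in the sets $X_w$. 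So $F+P$ need not be triangle-free, and $P$ need not be an ear. You also need $U$ complete to $\bigcup_u X_u$ before you may assume the nonadjacent pair in $N(D)$ avoids $U$; this is only a reordering, since those facts do not depend on $D$.

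Several pieces are sound:
\begin{itemize}
\item the maximality argument excluding an independent attachment of size at least two;
\item the $4$-hole arguments that $U$ and each $X_u$ are cliques;
\item the cap argument along a hole through $uw$ showing $X_u$ complete to $X_w$.
\end{itemize}
What is missing is an actual proof of the trichotomy for $N(v)\cap V(F)$ and of $D=\emptyset$, which is the substance of the lemma.
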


We also need the following notions. A \emph{wheel} in a graph $G$ is a
pair $(H,v)$, where $H$ is a hole of $G$ and $v\notin V(H)$ has at
least three neighbors on $H$. The vertex $v$ is called the
\emph{center} of the wheel.

Let $F$ be an induced subgraph of a graph $G$. An \emph{ear} with
attachments $x,z\in V(F)$ is an induced $xz$-path whose internal
vertices lie in $V(G)\setminus V(F)$. We say that $G$ is obtained from
$F$ by an \emph{ear addition} if there exist three consecutive
vertices $x,y,z$ of a hole in $F$ and an ear $P$ with attachments $x$
and $z$ such that the internal vertices of $P$ are precisely the
vertices of $V(G)\setminus V(F)$, and each internal vertex of $P$ has
no neighbor in $V(F)\setminus\{x,y,z\}$. Such an ear addition is called
\emph{good} if
\begin{itemize}
    \item $y$ has an odd number of neighbors in $V(P)$;
    \item $F$ contains no wheel $(H_1,v)$ such that
    $x,y,z\in V(H_1)$ and $vy\in E(G)$; and
    \item $F$ contains no wheel $(H_2,y)$ such that $x$ and $z$ are
    both neighbors of $y$ on $H_2$.
\end{itemize}
A \emph{good ear decomposition} of a graph $F$ is a sequence
$F_0,F_1,\ldots,F_d=F$ such that $F_0$ is a hole and, for each
$1\leq i\leq d$, $F_i$ is obtained from $F_{i-1}$ by a good ear
addition.
We call $F_0$ \emph{base graph}.

The graph obtained from the complete bipartite graph $K_{4,4}$ by
deleting a perfect matching is called the \emph{cube}.
The next lemma is used after the clique-blowup reduction. It gives
a good ear decomposition for the triangle-free graph $F$ obtained
from Lemma~\ref{lem:cameron}. In the proof of the main theorem, we
will use the last ear in this decomposition.

\begin{lemma}[Conforti--Cornu\'ejols--Kapoor--Vu\v{s}kovi\'c ~\cite{CCKV2000}]\label{lem:ear-decomposition}
Let $G$ be a connected triangle-free graph with at least three
vertices. Suppose that $G$ is not the cube and has no clique cutset.
Then $G$ is odd-signable if and only if it has a good ear
decomposition.
\end{lemma}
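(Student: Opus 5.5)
This lemma is quoted from \cite{CCKV2000}, and we use it as a black box. If one wanted to reprove it, I would use the standard characterization of odd-signable graphs by forbidden Truemper configurations. For a triangle-free graph, prisms and pyramids cannot occur, so $G$ is odd-signable if and only if it contains no \emph{theta} (three internally disjoint induced paths between two nonadjacent vertices, with no edges between their interiors) and no \emph{even wheel} (a wheel $(H,v)$ where $v$ has an even number of neighbours on $H$). Both directions of the lemma would be proved through this characterization.

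\emph{Good ear decomposition $\Rightarrow$ odd-signable.} I would argue by induction on $d$. The base hole $F_0$ is odd-signed by giving weight $1$ to one edge and $0$ to the others. For the step, let $F_i$ arise from $F_{i-1}$ by a good ear $P$ with attachments $x,z$ and middle vertex $y$. Extend the signing of $F_{i-1}$ to the edges of $P$ so that the cycle $P\cup\{xy,yz\}$ has odd weight. Then check every induced cycle $C$ of $F_i$ meeting the interior of $P$. Such a $C$ either contains the whole ear $P$ together with an induced $xz$-path $Q$ of $F_{i-1}$ avoiding $y$, or consists of $y$ and a subpath of $P$ between two consecutive neighbours of $y$.
\begin{itemize}
\item For the second type, the condition that $y$ has an odd number of neighbours on $P$ lets us choose the weights on $P$ so that all these small cycles are odd. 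This amounts to a parity bookkeeping along $P$.
\item For the first type, we need every such $Q$ to have the same weight parity as the path $xyz$. If the parities disagreed, then $Q$ together with $xyz$, or a shortest rerouting through a neighbour of $y$, would create either a theta in $F_{i-1}$ or an even wheel with centre $y$ or centre a neighbour of $y$. The two wheel conditions in the definition of a good ear forbid exactly these wheels, and $F_{i-1}$ has no theta by induction.
\end{itemize}

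\emph{Odd-signable $\Rightarrow$ good ear decomposition.} Assume $G$ is connected, triangle-free, odd-signable, has no clique cutset and is not the cube. Then $G$ has a hole, since otherwise it would be a tree with at least three vertices and would have a cut vertex. I would take an induced subgraph $F\subseteq G$ that has a good ear decomposition and is maximal with this property, and suppose $F\neq G$. Because $G$ has no clique cutset (in particular no cut vertex and no cutting edge), some vertex outside $F$ lies on a path leaving $F$ and returning to it. Choosing a shortest such ``ear'' $R$ with attachments $a,b\in V(F)$, the absence of thetas and even wheels should force $a,b$ to be the two neighbours of a common vertex $y$ on a hole of $F$. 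It should also force the interior of $R$ to have no neighbours in $F$ except possibly $y$.

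The main obstacle is the last step: showing that the resulting ear is \emph{good}, or that a different, well-chosen ear is. Odd-signability gives the odd parity at $y$ directly. The wheel conditions can fail, however, and this is where the cube (which is odd-signable but exceptional) and the clique-cutset hypothesis enter. I would first try to choose the pair $(F,R)$ to minimise $|R|$ and then, among those choices, to maximise the hole through $x,y,z$. When a forbidden wheel appears, I would reroute the ear along the wheel's rim to obtain a shorter or better ear, contradicting this extremal choice. The remaining configurations that resist rerouting should be shown to force either a clique cutset or $G$ being the cube.
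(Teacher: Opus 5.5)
Citing the lemma as a black box is exactly what the paper does: it gives no proof and uses only the direction ``odd-signable $\Rightarrow$ good ear decomposition'' in Corollary~\ref{cor:structural-reduction}. The optional reproof you sketch, however, would not go through as written, for the two reasons below.

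\textbf{The ``if'' direction: the parity is reversed.} Suppose $y$ has $n$ neighbours on $V(P)$, with $n$ odd and $x,z$ counted. The induced cycles through $y$ and the interior of $P$ are exactly the $n-1$ sectors. Modulo $2$, their weights sum to $w(P)+w(xy)+w(yz)$, because each interior spoke is counted twice. Making every sector odd therefore forces $P+xy+yz$ to have \emph{even} weight, so you cannot also make it odd.

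With the correct normalisation, $P\cup Q$ is odd if and only if $w(Q)+w(xy)+w(yz)$ is odd. So $Q$ and $xyz$ must have \emph{different} parities. Your ``same parity'' requirement already fails for $Q$ equal to the rest of the hole of $F_{i-1}$ through $x,y,z$, whose weight is odd by induction.

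Applying the same sector count inside $F_{i-1}$, what you actually need is a signing-free statement: $y$ has an even number of neighbours on every induced $xz$-path of $F_{i-1}-y$. The wheel conditions are what exclude the bad paths. For instance, a neighbour $v$ of $y$ with two further neighbours $a,b$ on a hole $H_1\ni x,y,z$ gives the induced path $x\cdots a\,v\,b\cdots z$, on which $y$ has exactly three neighbours.

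\textbf{The ``only if'' direction: the local forcing step is false.} This is the direction the paper needs. Apart from leaving the goodness and cube analysis open, the sketch relies on a claim that does not hold. Let $G$ be the $7$-hole $c_0c_1\cdots c_6$ plus a vertex $v$ adjacent to $c_0,c_2,c_4$. Then $G$ is triangle-free, odd-signable (a wheel with three spokes), has no clique cutset, and is not the cube.

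The $7$-hole $F$ has a good ear decomposition and $F\neq G$. Yet the only paths leaving and re-entering $F$ are of the form $c_i v c_j$, and $v$ always has a third neighbour on $F$. That neighbour cannot be the middle vertex $y$ without creating a triangle, so no ear can be appended to $F$ at all.

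$G$ does have a good ear decomposition, but only from a different base: the $4$-hole $vc_0c_1c_2$ plus the ear $c_0c_6c_5c_4c_3c_2$ with $y=v$. So excluding thetas and even wheels does not by itself force a shortest ear to attach at $a,y,b$ consecutive on a hole with interior seeing only $y$. Maximality of $F$ would have to be used through re-decomposition, changing the base hole or earlier ears, and your plan has no mechanism for that. For this paper, relying on \cite{CCKV2000} alone is the right choice.
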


Substituting a vertex $v$ of a graph $G$ by a graph $H$ is the
operation that produces a graph with vertex set
$V(H)\cup V(G-v)$ and edge set
$$
E(H)\cup E(G-v)\cup
\{xy\mid x\in V(H),\ y\in N_G(v)\}.
$$
When $H$ is a clique, not necessarily nonempty, this operation is
called \emph{blowing up} $v$ into a clique. The clique is called the \emph{bag} of $v$. A graph obtained from a
graph $H$ by blowing up all its vertices into cliques is called a
\emph{clique blowup} of $H$. If every such clique is nonempty, then the
resulting graph is called a \emph{nonempty clique blowup} of $H$. If
every such clique has size $t$, then the resulting graph is called the
\emph{$t$-clique blowup} of $H$ and is denoted by $H^t$.
The following result treats the case where the graph $F$ in the
clique blowup is an odd cycle.
It will be used in the proof of Theorem~\ref{thm:main} for the case when
the good ear decomposition has no added ear. It will also be used
for the sharpness construction.

\begin{lemma}[Chen--Xu~\cite{ChenXu2024}]
\label{lem:odd-cycle-blowup}
Let $\ell\geq2$ be an integer, and let $G$ be a clique blowup of
$C_{2\ell+1}$. Then
$$
\chi(G)\leq
\left\lceil\frac{2\ell+1}{2\ell}\omega(G)\right\rceil.
$$
Moreover, this bound is attained by $C_{2\ell+1}^k$ for every positive
integer $k$.
\end{lemma}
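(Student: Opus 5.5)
The statement to prove is Lemma~\ref{lem:odd-cycle-blowup}. Let $n=2\ell+1$ and let $B_1,\dots,B_n$ be the bags of $G$ in cyclic order, with $b_i=|B_i|$. Put $\omega=\omega(G)=\max_i(b_i+b_{i+1})$, indices modulo $n$, and $k=\lceil\frac{n}{n-1}\omega\rceil$. I will work in the cyclic palette $\mathbb{Z}_k$. The plan is to colour each bag $B_i$ by a cyclic interval $I_i$ of $b_i$ consecutive colours, chosen so that consecutive intervals $I_i$ and $I_{i+1}$ are disjoint. A greedy placement does this: set $I_1=[0,b_1)$ and let each next interval start where the previous one ended, so $I_i=[s_i,s_i+b_i)$ with $s_i=b_1+\dots+b_{i-1}$. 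If $\sum_i b_i\le k$ there is nothing to do, because all bags receive distinct colours. This shortcut does not work in general, since $\sum_i b_i$ can be as large as about $\frac{n}{2}\omega$.

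Instead I will use the standard ``wrap twice'' idea from colouring blowups of odd cycles, comparing with the circular chromatic number of $C_n$, which is $\frac{2\ell+1}{\ell}$. The scheme is to place the intervals around a circle of circumference $k$ so that consecutive intervals are disjoint and the whole walk goes around the circle exactly $\ell$ times. Then $I_i$ and $I_{i+1}$ are adjacent arcs, and the closing condition between $I_n$ and $I_1$ requires $\sum_i b_i\le \ell k$ with appropriate slack. Since the bags are cliques of possibly unequal sizes, I will use a weighted argument. The fractional chromatic number of $G$ is $\max\bigl(\omega,\ \lceil\sum_i b_i/\ell\rceil\bigr)$, because a maximum stable set meets at most $\ell$ bags. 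Moreover $\sum_i b_i=\frac12\sum_i(b_i+b_{i+1})\le \frac{n}{2}\omega$. These facts suggest the target: produce a $k$-colouring directly by induction on $\sum_i b_i$.

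\textbf{Induction step.} Suppose every bag is nonempty; otherwise $G$ is a blowup of a path, hence perfect, and $\chi=\omega$. Pick a stable set $S$ consisting of one vertex from each of $\ell$ pairwise nonadjacent bags, chosen to meet every maximum clique $B_i\cup B_{i+1}$ whenever possible. Give $S$ one new colour and recurse on $G-S$. The key counting step is to show that after at most $n-1$ such rounds, $\omega$ has dropped by at least $n-2$, or the bound has dropped accordingly. More cleanly, I expect to show $\chi(G)\le\chi(G')+1$ where $G'$ has clique number $\omega-1$, except in a controlled fraction of rounds. That gives the amortized rate $\frac{n}{n-1}$ colours per unit of $\omega$, which is exactly the bound $\lceil\frac{2\ell+1}{2\ell}\omega\rceil$.

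\textbf{Main obstacle.} The hard part is choosing $S$ so that it hits all maximum cliques. A stable set avoids at least one pair of consecutive bags around an odd cycle, so some maximum clique may be missed. I will handle this by tracking the deficiency $d_i=\omega-b_i-b_{i+1}$ of each edge. When every edge is tight, so that $b_i+b_{i+1}=\omega$ for all $i$, the sizes force $b_i=\omega/2$ for all $i$. In that case I colour explicitly with the interval scheme using $\lceil\frac{n\omega}{2\ell}\rceil$ colours, which matches the $C_n^t$ colouring with $\lceil\frac{(2\ell+1)t}{\ell}\rceil$ colours. Otherwise some edge has slack, and I choose $S$ to skip that edge.

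\textbf{Sharpness.} For $C_{2\ell+1}^k$ we have $\alpha=\ell$ and $|V|=(2\ell+1)k$. Hence $\chi\ge\lceil\frac{(2\ell+1)k}{\ell}\rceil=\lceil\frac{2\ell+1}{2\ell}\,\omega\rceil$, since $\omega=2k$.
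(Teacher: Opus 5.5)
The paper does not prove this lemma; it quotes it from Chen--Xu. So I am judging your argument on its own merits.

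The skeleton in your ``Main obstacle'' paragraph is sound, and it is easier than you make it. Any $\ell$ pairwise nonconsecutive indices of $C_{2\ell+1}$ cover $2\ell$ of its $2\ell+1$ edges, and any prescribed edge can be the uncovered one. So if $b_j+b_{j+1}<\omega$, take $S$ from the $\ell$ bags whose uncovered edge is $\{j,j+1\}$. Then $S$ meets every maximum clique $B_i\cup B_{i+1}$, so $\omega(G-S)\le\omega-1$. The inequality $1+\lceil\frac{n}{n-1}(\omega-1)\rceil\le\lceil\frac{n}{n-1}\omega\rceil$ then closes the induction. No amortization is involved. The accounting you announce in the ``Induction step'' paragraph ($n-1$ colours for a drop of $n-2$ in $\omega$) would only give the coefficient $\frac{n-1}{n-2}>\frac{n}{n-1}$, so it should be dropped.

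The genuine gap is the all-tight case $G=C_n^t$. This is the only place where real content is needed, and you never actually give a colouring there. The interval scheme you describe does not close up: each interval starts where the previous one ends, so consecutive arcs are adjacent. For $C_7$ with $t=1$ and $k=3$ it colours the bags $0,1,2,0,1,2,0$, so $B_7$ and $B_1$ clash. Going around exactly $\ell$ times forces slack, and you must say where the slack goes.

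One correct placement is as follows. Choose integers $g_1,\dots,g_n\in[t,k-t]$ with $\sum_i g_i=\ell k$; this is possible because $nt\le\ell k\le n(k-t)$. Set $s_1=0$ and $s_{i+1}=s_i+g_i$, and give $B_i$ the colours $s_i,\dots,s_i+t-1$ modulo $k$. Each consecutive pair of intervals then sits disjointly inside a window of length $k$. This includes the pair $(B_n,B_1)$, since $s_n+g_n=\ell k$.

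The same construction with $g_i\in[b_i,k-b_{i+1}]$ handles arbitrary bag sizes at once. It needs only $b_i+b_{i+1}\le\omega\le k$ together with $\sum_i b_i\le\ell k\le nk-\sum_i b_i$. The latter follows from $2\sum_i b_i\le n\omega$ and $\ell k\ge\frac{n}{2}\omega$. So your own paragraph-2 idea, done with slack, proves the upper bound directly and makes the induction unnecessary.

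Your sharpness argument via $\alpha(C_{2\ell+1}^k)=\ell$ is correct.
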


The next corollary is the main structural theorem in this section.
It reduces the graph $G$ to a nonempty
clique blowup of a triangle-free graph $F$. 

\begin{corollary}\label{cor:structural-reduction}
Let $G$ be a connected $\{\mathrm{cap},\mathrm{even\ hole}\}$-free
graph that contains a hole and has neither a clique cutset nor a
universal clique. Then $G$ is a nonempty clique blowup of a
triangle-free graph $F$ with no clique cutset. Moreover, $F$ admits a
good ear decomposition whose base graph is an odd hole.
\end{corollary}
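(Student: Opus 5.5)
The plan is to apply Lemma~\ref{lem:cameron} first and then Lemma~\ref{lem:ear-decomposition}. Since $G$ is even-hole-free, it is in particular $4$-hole-free, so $G$ is $\{\mathrm{cap},4\text{-}\mathrm{hole}\}$-free. It contains a hole and has no clique cutset, so Lemma~\ref{lem:cameron} applies. Choose a maximal connected induced subgraph $F$ with at least three vertices that is triangle-free and has no clique cutset; such an $F$ exists because any hole of $G$ is a candidate. Then $G$ arises from $F$ by blowing each vertex up into a nonempty clique and adding a universal clique $K$. Since $G$ has no nonempty universal clique, $K=\emptyset$, so $G$ is a nonempty clique blowup of $F$. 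By choice, $F$ is triangle-free, connected, and has no clique cutset.

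Next I verify the hypotheses of Lemma~\ref{lem:ear-decomposition} for $F$. Since $F$ is an induced subgraph of $G$, it is even-hole-free. The cube contains induced $C_6$'s, or in fact $4$-holes, so $F$ is not the cube.

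For odd-signability, I would take $w\equiv 1$ on $E(F)$. In a triangle-free graph every induced cycle is a hole, and every hole of $F$ is odd, so its weight sum is its length, which is odd. Hence $F$ is odd-signable, and Lemma~\ref{lem:ear-decomposition} gives a good ear decomposition $F_0,\dots,F_d=F$ with $F_0$ a hole. Because $F_0$ is a hole of $F$, hence of $G$, it has odd length, so the base graph is an odd hole.

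The only mildly delicate step is the application of Lemma~\ref{lem:cameron}: I must make sure the choice of $F$ is legitimate, which it is since a hole is triangle-free, connected and has no clique cutset, and that the universal clique in its conclusion is indeed empty here. The second point is exactly the hypothesis that $G$ has no universal clique, read as no nonempty one.
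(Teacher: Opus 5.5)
Your proposal is correct and follows the paper's proof essentially step for step. You take $F$ maximal, with a hole witnessing existence, and apply Lemma~\ref{lem:cameron}; the absence of a (nonempty) universal clique then makes $G$ a nonempty clique blowup of $F$. You then invoke Lemma~\ref{lem:ear-decomposition} using the all-ones signing and the fact that the cube has a $4$-hole, and the base hole is odd because $G$ is even-hole-free.
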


\begin{proof}
Let $C$ be a hole of $G$. Since $G$ is even-hole-free, $C$ is an odd
hole. In particular, $C$ is connected, triangle-free, and has no
clique cutset. Choose a maximal connected induced subgraph $F$ of $G$
containing $C$ such that $F$ is triangle-free and has no clique
cutset.

By Lemma~\ref{lem:cameron}, $G$ is obtained from $F$ by blowing up each
vertex of $F$ into a nonempty clique and then adding a universal
clique. Since $G$ has no universal clique, the latter clique is empty.
Thus $G$ is a nonempty clique blowup of $F$.

Since $F$ is  $\{\mathrm{triangle},\mathrm{even\ hole}\}$-free, assigning weight one to
every edge shows that $F$ is odd-signable. Moreover, $F$ is not the
cube, because the cube contains a $4$-hole. Hence
Lemma~\ref{lem:ear-decomposition} implies that $F$ has a good ear
decomposition. Its initial hole is odd because $F$ is $\{\mathrm{triangle},\mathrm{even\ hole}\}$-free.
This completes the proof of Corollary~\ref{cor:structural-reduction}.
\end{proof}

\section{Extending a coloring through the last ear}\label{sec:ear-coloring}

In this section, we prove the coloring extension lemmas needed for the
last ear. We first consider a clique blowup of a path with prescribed
color sets on its end bags.

For the next lemma, let $\mathcal{P}$ be a clique blowup of the path
$u_1v_1u_2v_2\ldots u_rv_r$, where $r\geq2$. For each
$1\leq i\leq r$, let $U_i$ and $V_i$ be the bags corresponding to
$u_i$ and $v_i$, respectively. Thus the bags of $\mathcal{P}$ occur in
the order
$U_1,V_1,U_2,V_2,\ldots,U_r,V_r$.

\begin{lemma}\label{lem:path-extension}
Let $H$ be obtained from $\mathcal P$ by adding a nonempty clique $Y$
complete to $U_1\cup V_r$ and anticomplete to every other bag of
$\mathcal P$. Suppose that $\omega(H)\leq\omega$, and let
$k=\omega+s$, where $s\geq1$. Suppose that $Y\cup U_1$ is already
properly colored with colors from $[k]$. Let $A$ and $C$ be the color
sets used on $Y$ and $U_1$, respectively, and let
$B\subseteq[k]\setminus A$ with $|B|=|V_r|$. If
$$
|B\cap C|\leq
\sum_{i=2}^r\bigl(k-|V_{i-1}|-|U_i|\bigr),
$$
then the coloring of $Y\cup U_1$ extends to a $k$-coloring of $H$ in
which $V_r$ uses the color set $B$.
\end{lemma}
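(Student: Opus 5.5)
The plan is to color the bags greedily along the path, in the order $V_1,U_2,V_2,\ldots,U_r$, and finally to give $V_r$ the prescribed set $B$. Throughout, write $S_i$ for the color set used on $U_i$ (so $S_1=C$) and $T_i$ for the color set used on $V_i$. We will only ever set $T_r=B$.

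Since $Y$ is adjacent only to $U_1$ and $V_r$, and $B\cap A=\emptyset$, the only constraints are the path constraints:
\begin{itemize}
\item $T_{i-1}\cap S_{i-1}=\emptyset$ and $S_i\cap T_{i-1}=\emptyset$ for $2\le i\le r$;
\item $S_r\cap B=\emptyset$ at the last step.
\end{itemize}
For $r\ge 2$, $U_1$ and $V_r$ are nonadjacent, so $C$ is allowed to meet $B$. It therefore suffices to drive the quantity $d_i:=|S_i\cap B|$ from $d_1=|B\cap C|$ down to $d_r=0$. Put $\sigma_i:=k-|V_{i-1}|-|U_i|$. Each $\sigma_i\ge s\ge 1$, because $V_{i-1}\cup U_i$ is a clique of $H$ and $\omega(H)\le\omega$. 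The hypothesis says exactly that $\sum_{i=2}^r\sigma_i\ge d_1$.

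The key step is a one-step lemma. Given a valid $S_{i-1}$ with $d_{i-1}=|S_{i-1}\cap B|$, we can choose $T_{i-1}$ and $S_i$ so that
\[
S_i\cap B\subseteq S_{i-1}\cap B \qquad\text{and}\qquad d_i\le\max\{0,\,d_{i-1}-\sigma_i\}.
\]
Iterating this along the path gives $d_r\le\max\{0,\,d_1-\sum_{i=2}^r\sigma_i\}=0$, so $T_r=B$ can be used on $V_r$. The needed inequality $|B|+|U_r|\le k$ holds since $|V_r|+|U_r|\le\omega$.

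For the one-step lemma I would make two greedy choices.
\begin{itemize}
\item \emph{Choice of $T_{i-1}$.} Draw its $|V_{i-1}|$ colors from $[k]\setminus S_{i-1}$, which is large enough because $|U_{i-1}|+|V_{i-1}|\le\omega<k$. Give priority to colors in $B\setminus S_{i-1}$, since $B$-colors placed on $V_{i-1}$ are harmless for later bags. Otherwise give priority to colors outside $B$.
\item \emph{Choice of $S_i$.} Draw its $|U_i|$ colors from $[k]\setminus T_{i-1}$, a set of size $k-|V_{i-1}|=|U_i|+\sigma_i$. Use colors outside $B$ first, and fall back on colors of $S_{i-1}\cap B$ only when forced. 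Never use colors of $B\setminus S_{i-1}$, which keeps the containment $S_i\cap B\subseteq S_{i-1}\cap B$.
\end{itemize}

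The main obstacle is the counting in this one-step lemma. We must show that the number of non-$B$ colors available outside $T_{i-1}$ is at least $\min\{|U_i|,\ |U_i|-d_{i-1}+\sigma_i\}$. The colors outside $T_{i-1}$ number $|U_i|+\sigma_i$. Among them, the $B$-colors that are excluded (those of $B\setminus S_{i-1}$) have been pushed into $T_{i-1}$ as far as possible. The only $B$-colors left over are then those of $S_{i-1}\cap B$, of which there are $d_{i-1}$, together with any $B\setminus S_{i-1}$ colors that did not fit into $T_{i-1}$.

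In the second situation $T_{i-1}$ consists entirely of $B$-colors. Then the non-$B$ colors outside $T_{i-1}$ number $k-|B\cup S_{i-1}|+|S_{i-1}\setminus B|$, and I would bound this separately using $|B|=|V_r|\le\omega-1<k$. I expect this case split, according to whether $|B\setminus S_{i-1}|\ge|V_{i-1}|$, to be the delicate point. If the crude counting fails there, I would strengthen the invariant to also track $|S_i\setminus B|$, or else choose $T_{i-1}$ to contain colors of $S_{i-2}\cap B$ in order to gain extra slack.
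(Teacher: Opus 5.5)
Your reduction to driving $d_i=|S_i\cap B|$ down to $d_r=0$ is sound. However, the one-step lemma it rests on is false, and the case you flagged as delicate is exactly where it breaks. Since $S_i$ has $|U_i|$ colors and only $k-|B|$ colors lie outside $B$, every proper coloring has $d_i\ge |B|+|U_i|-k$. This is large when $|U_i|$ and $|B|=|V_r|$ are both close to $\omega$, which is possible because for $i<r$ these bags lie in no common clique.

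Concretely, take $r=3$, $\omega=10$, $s=1$, $k=11$, $|Y|=|U_1|=|V_1|=|V_2|=|U_3|=1$ and $|U_2|=|V_3|=9$, so $\omega(H)=10$. Color $Y$ with $1$ and $U_1$ with $2$, and let $B=\{3,\dots,11\}$. Then $d_1=0$ and the hypothesis holds. Yet every proper coloring has $d_2\ge 9-2=7$, whereas your lemma demands $d_2\le\max\{0,-\sigma_2\}=0$ and $S_2\cap B=\emptyset$. In particular, your rule of never using colors of $B\setminus S_{i-1}$ cannot even be executed.

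The statement itself is still true here: take $T_1=\{3\}$, $S_2=\{1,2,4,\dots,10\}$, $T_2=\{11\}$, $S_3=\{1\}$, $T_3=B$. So $d_i$ jumps from $0$ to $7$ and is then killed by the large slack $\sigma_3=9$. Neither of your suggested remedies helps. Tracking $|S_i\setminus B|=|U_i|-d_i$ carries no new information, and no choice of $T_{i-1}$ can beat a lower bound valid for all colorings.

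What is missing is the correct recursion, together with an argument that these forced jumps are absorbed later. Drop the containment requirement and let $S_i$ take non-$B$ colors outside $T_{i-1}$ first, then arbitrary $B$-colors outside $T_{i-1}$. Your greedy choices then give exactly
\[
d_i=\max\bigl\{0,\ d_{i-1}-\sigma_i,\ |B|+|U_i|-k\bigr\}.
\]
Unrolling shows $d_r=0$ provided $d_1\le\sum_{i=2}^r\sigma_i$ (the lemma's hypothesis) and also $|B|+|U_j|-k\le\sum_{i=j+1}^r\sigma_i$ for every $2\le j\le r$.

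The second family of inequalities is the real content of the lemma. From $|U_i|+|V_i|\le\omega=k-s$ you get $\sigma_i\ge|V_i|-|V_{i-1}|+s$, hence $\sum_{i=j+1}^r\sigma_i\ge|V_r|-|V_j|+(r-j)s$. On the other hand, $|B|+|U_j|-k\le|V_r|-|V_j|-s$.

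The paper handles the same phenomenon from the other side, tracking $a_i=|C_i\cap B|$ on the $V$-bags. Its lower bound $a_i\ge|V_i|+|B|-k$ is the counterpart of your forced term. The telescoping sum in the case of a largest index $j$ with $a_j=|V_j|$ is the counterpart of the absorption inequality above.
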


\begin{proof}
For each $1\leq i\leq r$, we shall choose a color set $C_i$ for
$V_i$. Set
$
a_1=\min\{|V_1|,|B\setminus C|\}.
$
We first choose a set $C_1\subseteq[k]\setminus C$ of size $|V_1|$
such that $|C_1\cap B|=a_1$. Such a choice is possible. Indeed, if
$a_1=|V_1|$, we use $|V_1|$ colors from $B\setminus C$. Otherwise,
$a_1=|B\setminus C|$, and the number of further colors required is
$|V_1|-|B\setminus C|$. Since $|C|=|U_1|$ and $U_1\cup V_1$ is a
clique,
\begin{align*}
|[k]\setminus(B\cup C)|
 -\bigl(|V_1|-|B\setminus C|\bigr)
&=k-|C|-|V_1|\geq0.
\end{align*}
Thus the remaining colors can be chosen from
$[k]\setminus(B\cup C)$.

For $2\leq i\leq r$, define recursively
$
a_i=\min\bigl\{|V_i|,|B|,
 a_{i-1}+k-|V_{i-1}|-|U_i|\bigr\}.
$
\begin{claim}\label{a_i}
    $a_i\geq |V_i|+|B|-k $, for every $1\leq i\leq r$.
\end{claim}
\begin{subproof}[\bf Proof of Claim~\ref{a_i}]
     For $i=1$, both $|V_1|$ and
$|B\setminus C|$ are at least $|V_1|+|B|-k$: the latter follows from
$|B\cap C|\leq |C|=|U_1|\leq k-|V_1|.
$
Suppose that $i\geq2$ and that Claim~\ref{a_i} holds for $i-1$. Clearly, $\min\{|V_i|,|B|\}\geq |V_i|+|B|-k$. Therefore, it suffices to prove that $a_i=a_{i-1}+k-|V_{i-1}|-|U_i|$. Since
$U_i\cup V_i$ is a clique,
\begin{align*}
a_{i-1}+k-|V_{i-1}|-|U_i|
\geq |V_{i-1}|+|B|-k
   +k-|V_{i-1}|-|U_i|
=|B|-|U_i|
\geq |V_i|+|B|-k.
\end{align*}
 This proves Claim~\ref{a_i}.
\end{subproof}

It follows from Claim~\ref{a_i} that, for each $2\leq i\leq r$, there exists a set
$C_i\subseteq[k]$ of size $|V_i|$ such that $|C_i\cap B|=a_i$.
Choose $C_i\subseteq[k]$ with $|C_i|=|V_i|$ and
$|C_i\cap B|=a_i$. We choose its colors in $B$ and outside $B$
separately. 
First keep $\min\{a_i,a_{i-1}\}$ colors from $C_{i-1}\cap B$; if this is fewer than $a_i$, add $a_i-\min\{a_i,a_{i-1}\}$ colors from $B\setminus C_{i-1}$. 
Then keep as many colors as possible from $C_{i-1}\setminus B$, but no more than $|V_i|-a_i$ colors, and fill the remaining places with colors from $[k]\setminus(B\cup C_{i-1})$. The resulting set is $C_i$, and it satisfies $|C_i|=|V_i|$ and $|C_i\cap B|=a_i$.
Thus $|C_i\cap C_{i-1}|$ is as large as
possible among all sets with the prescribed size and prescribed
intersection with $B$.

\begin{claim}\label{claim:last-bag}
$C_r=B$.
\end{claim}
\begin{subproof}[\bf Proof of Claim~\ref{claim:last-bag}]
     Suppose otherwise. Since $|C_r|=|V_r|=|B|$, we have
$a_r<|B|=|V_r|$. For $2\leq i\leq r$, set
$
d_i=k-|V_{i-1}|-|U_i|.
$
Since $V_{i-1}\cup U_i$ is a clique and $k=\omega+s$, we have
$d_i\geq s>0$.

First suppose that there is an index $j$ with $a_j=|V_j|$, and choose
$j$ largest. Then $j<r$. We have $a_i<|V_i|$ for every $i>j$. We also
have $a_i<|B|$ for every $i>j$. Indeed, if $a_h=|B|$ for some $h>j$,
let $h'>h$ be the first index with $a_{h'}<|B|$. Since
$d_{h'}>0$, by the definition of $a_{h'}$,
we have
$a_{h'}=|V_{h'}|$ and this contrary to the choice of $j$. Therefore, for every
$j<i\leq r$, $a_i=a_{i-1}+k-|V_{i-1}|-|U_i|=a_{i-1}+d_i$. It follows that
\begin{align*}
a_r
=|V_j|+\sum_{i=j+1}^r
  \bigl(k-|V_{i-1}|-|U_i|\bigr)
=|V_r|+\sum_{i=j+1}^r
  \bigl(k-|V_i|-|U_i|\bigr)
\geq |V_r|=|B|,
\end{align*}
a
contradiction.

It remains to consider the case in which $a_i<|V_i|$ for every $i$.
As above, $a_i<|B|$ for every $i$, for otherwise the first later drop
below $|B|$ would force equality with the size of the corresponding
bag. Consequently,
$a_1=|B\setminus C|$ and $a_i=a_{i-1}+d_i$ for every
$2\leq i\leq r$. By the hypothesis of the lemma,
\begin{align*}
a_r
=|B\setminus C|+
  \sum_{i=2}^r\bigl(k-|V_{i-1}|-|U_i|\bigr)
\geq |B\setminus C|+|B\cap C|
=|B|,
\end{align*}
again a contradiction. This proves Claim~\ref{claim:last-bag}.
\end{subproof}

\begin{claim}\label{claim:consecutive-color-sets}
For each $2\leq i\leq r$,
$|C_{i-1}\cup C_i|\leq k-|U_i|$.
\end{claim}
\begin{subproof}[\bf Proof of Claim~\ref{claim:consecutive-color-sets}]
Fix $2\leq i\leq r$. Suppose first that $a_i<a_{i-1}$. Since
$|B|\geq a_{i-1}$ and
$a_{i-1}+k-|V_{i-1}|-|U_i|>a_{i-1}$, the definition of $a_i$ implies
that $a_i=|V_i|$. Thus $C_i\subseteq B$. By the choice of $C_i$,
$C_i\subseteq C_{i-1}\cap B$. Hence
$
|C_{i-1}\cup C_i|=|V_{i-1}|\leq k-|U_i|.
$

Now suppose that $a_i\geq a_{i-1}$. By the choice of $C_i$, it uses all colors in
$C_{i-1}\cap B$ and then as many colors as possible from
$C_{i-1}\setminus B$. If the colors in $C_{i-1}\setminus B$ are
enough to supply all $|V_i|-a_i$ colors outside $B$, then
$
|C_i\setminus C_{i-1}|=a_i-a_{i-1}
\leq k-|V_{i-1}|-|U_i|,
$
and therefore
$
|C_{i-1}\cup C_i|
\leq |V_{i-1}|+k-|V_{i-1}|-|U_i|
=k-|U_i|.
$
If these colors are not enough, then every color of $C_{i-1}$ is used
in $C_i$, and hence $C_{i-1}\subseteq C_i$. Therefore,
$$
|C_{i-1}\cup C_i|=|C_i|=|V_i|\leq k-|U_i|.
$$
This proves Claim~\ref{claim:consecutive-color-sets}.
\end{subproof}

\medskip

By Claim~\ref{claim:last-bag}, $C_r=B$.
Then by Claim~\ref{claim:consecutive-color-sets},
for each $2\leq i\leq r$,
there are at least
$|U_i|$ colors outside $C_{i-1}\cup C_i$; 
we may use any $|U_i|$ of them on
$U_i$. Together with the given coloring of $Y\cup U_1$, the chosen
color sets give a proper $k$-coloring of $H$. This completes the proof
of Lemma~\ref{lem:path-extension}.
\end{proof}

\begin{lemma}\label{lem:two-odd-holes}
Let $q\geq3$ be an integer, and let $H$ be a clique blowup of the union
of two odd holes, each of length at least $2q+1$, that intersect
exactly in an edge $yz$. Let $Y$ be the bag corresponding to $y$, and
let $X$ and $X'$ be the bags corresponding to the two neighbors of
$y$ other than $z$. Let $w\geq\omega(H)$ be an integer, and set
$k=w+\lceil w/(2q)\rceil$. Then every $k$-coloring of
$H[X\cup Y\cup X']$ extends to a $k$-coloring of $H$.
\end{lemma}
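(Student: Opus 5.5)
The plan is to reduce the statement to two applications of Lemma~\ref{lem:path-extension}, one for each hole, after first choosing the colors of the bag $Z$ of $z$.

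\emph{Setup.} Let the two holes be $C^1=y\,x\,p_2\ldots p_{2m-1}\,z\,y$ and $C^2=y\,x'\,p'_2\ldots p'_{2m'-1}\,z\,y$, where $2m+1,2m'+1\geq 2q+1$, so $m,m'\geq q$. In each hole, the part from $x$ to $z$ (respectively from $x'$ to $z$) is an induced path on an even number of vertices. Its clique blowup is therefore of the form $\mathcal P$ in Lemma~\ref{lem:path-extension} with $r=m$ (respectively $r=m'$). In the first path, $U_1=X$ and $V_r=Z$; the clique $Y$ is complete to $U_1\cup V_r$ and anticomplete to the other bags, exactly as that lemma requires. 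The two paths share only the bag $Z$, and no edges join their interiors. Hence, once a color set $B$ for $Z$ is fixed, the two extensions given by Lemma~\ref{lem:path-extension} are independent, and together they give a $k$-coloring of $H$. Put $s=k-w=\lceil w/(2q)\rceil$, let $A,C,C'$ be the color sets on $Y,X,X'$, and note that $C\cap A=C'\cap A=\emptyset$. Everything therefore reduces to choosing $B\subseteq[k]\setminus A$ with $|B|=|Z|$ that satisfies the hypothesis of Lemma~\ref{lem:path-extension} for both paths simultaneously.

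\emph{Lower bound on the slack.} For the first path, write $S=\sum_{i=2}^{r}(k-|V_{i-1}|-|U_i|)$. Bounding each term by $s$ alone is too weak: when $C=C'$ and $X$, $Z$ are large, the overlap $|B\cap C|$ can be close to $w-s$, whereas $(r-1)s$ is only about $w/2$. Instead, I regroup the sum using the cliques $U_iV_i$ for $2\le i\le r-1$, each of size at most $w$:
\[
S=(r-1)k-|V_1|-|U_r|-\sum_{i=2}^{r-1}\bigl(|U_i|+|V_i|\bigr)\;\geq\;(r-1)s+w-|V_1|-|U_r| .
\]
Since $X\cup V_1$ and $U_r\cup Z$ are cliques, $|V_1|\le w-|X|$ and $|U_r|\le w-|Z|$. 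This gives $S\geq (q-1)s+|X|+|Z|-w$. Because $|B\cap C|=|Z|+|X|-|B\cup C|$, the hypothesis of Lemma~\ref{lem:path-extension} follows once
\[
|B\cup C|\geq w-(q-1)s,
\]
and symmetrically once $|B\cup C'|\geq w-(q-1)s$.

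\emph{Choosing $B$ (main obstacle).} It remains to pick $B\subseteq[k]\setminus A$ of size $|Z|$ with both $|B\cup C|$ and $|B\cup C'|$ at least $w-(q-1)s$. The available room is $|[k]\setminus A|=k-|A|\ge k-(w-|Z|)=|Z|+s$, because $Y\cup Z$ is a clique. I would choose $B$ greedily.
\begin{enumerate}
\item First take colors of $[k]\setminus(A\cup C\cup C')$; each such color enlarges both unions.
\item Next take colors from $C\setminus C'$ and $C'\setminus C$ alternately; each enlarges one of the two unions.
\item Only at the end take colors from $C\cap C'$.
\end{enumerate}
If $|C|\geq w-(q-1)s$, the condition for $C$ holds trivially, and similarly for $C'$. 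So assume a union is still deficient. The counting step is to show the greedy choice then reaches the target. This uses $|C|,|C'|\le w-|A|$, the room bound $k-|A|\ge|Z|+s$, and $(q-1)s\geq (q-1)w/(2q)$. The balanced alternation in the second phase is what should make both unions large at once; this is where $q\geq3$ and the exact value $s=\lceil w/(2q)\rceil$ should enter. Checking this inequality across the cases, according to which of the three phases the greedy choice stops in, is the step I expect to be the delicate one.

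If the purely greedy bound falls short in an extreme case, for instance when $|Z|$ is tiny and $C=C'$, I would fall back on the slack not yet used. Either $|Z|$ is small, and then $|B\cap C|\le|Z|$ is already at most $S$; or $|Z|$ is large, and then $|U_r|\le w-|Z|$ makes $S$ large. Taking the better of these two estimates should cover all cases.
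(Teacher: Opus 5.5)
Your reduction is the paper's: fix a color set $B$ on $Z$, apply Lemma~\ref{lem:path-extension} once per hole, and bound the slack. Your regrouped bound $S\ge(q-1)s+|X|+|Z|-w$ coincides with the paper's bound on $S_0$.

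The gap is the existence of one $B$ that works for both holes, which is Claim~\ref{claim:common-color-set} in the paper and carries the real content of the lemma. You leave this at ``should cover all cases''. Moreover, the intermediate target you aim for is false in general. Take $q=3$, $w=60$ (so $s=10$, $k=70$), $|Y|=|X|=|X'|=|Z|=25$, and interior bags of sizes $35,25,35,25$ along each $7$-hole. Then $\omega(H)=60$ and $S=S'=20$. Let $C\cup C'=[k]\setminus A$ with $|C\cap C'|=5$.
\begin{itemize}
\item Every admissible $B$ has $|B\cap C|+|B\cap C'|\ge|B|=25$.
\item Your target $|B\cup C|,|B\cup C'|\ge w-(q-1)s=40$ would force both intersections to be at most $10$, which is impossible.
\item Neither fallback applies: $|Z|>(q-1)s$, and since $|X|+|Z|<w$ the second estimate gives nothing beyond $(q-1)s$.
\end{itemize}
The lemma survives only because the actual requirement is $|B\cap C|\le S$ with $S\ge(q-1)s=20$, which a $13/12$ split meets.

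What you need is to test your greedy directly against $S\ge\max\{(q-1)s,\,(q-1)s+|X|+|Z|-w\}$. Three observations do it.
\begin{enumerate}
\item[(i)] If $|Z|\ge w-(q-1)s$, then $S\ge|X|\ge|B\cap C|$, and there is nothing to check.
\item[(ii)] Otherwise $w\le 2qs$ gives $|Z|\le(q+1)s-1$, and $q\ge3$ gives $\lceil|Z|/2\rceil\le(q-1)s$. If the greedy never exhausts $C'\setminus C$, the alternation yields $|B\cap C|\le\lceil|Z|/2\rceil\le S$.
\item[(iii)] If it does exhaust $C'\setminus C$, then $|B\cap C|=\max\{0,|X|+|Y|+|Z|-k\}$, the least possible value over all $B$. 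When $|Z|\le qs$ this is at most $|Z|-s\le(q-1)s$. When $|Z|>qs$, the clique $Y\cup Z$ forces $|Y|<qs$, whence $|X|+|Y|+|Z|-k<(q-1)s+|X|+|Z|-w$.
\end{enumerate}
With these, your greedy becomes a valid proof, and it is the paper's argument in complementary form. The paper chooses $D=U\setminus B$ meeting $C_0$ and $C_0'$ in at least $\delta_0$ and $\delta_0'$ colors. It checks $\max\{\delta_0,\delta_0'\}\le k-|Y|-|Z|$, which is your (iii). It also checks $\delta_0+\delta_0'\le k-|Y|-|Z|+|C_0\cap C_0'|$, where $q\ge3$ enters through $|Z|<(q+1)s\le2(q-1)s$, as in your (ii).
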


\begin{proof}
Let $s=\lceil w/(2q)\rceil$, so $k=w+s$ and $w\leq2qs$. Write the two
holes as $yx_0x_1\ldots x_{2\ell-1}y$ and $yx_0'x_1'\ldots x_{2t-1}'y$, where $x_{2\ell-1}=x_{2t-1}'=z$ and
$\min\{\ell,t\}\geq q$. Let $X_i$ and $X_i'$ be the bags
corresponding to $x_i$ and $x_i'$, respectively, and set
$Z=X_{2\ell-1}=X_{2t-1}'$. Thus $X_0=X$ and $X_0'=X'$.

Fix the given $k$-coloring of $H[Y\cup X_0\cup X_0']$. Let $A$, $C_0$,
and $C_0'$ be the color sets used on $Y$, $X_0$, and $X_0'$, respectively,
and let $U=[k]\setminus A$. Since $Y$ is complete to both $X_0$ and
$X_0'$, we have $C_0,C_0'\subseteq U$.

Set
$S_0=\sum_{i=1}^{\ell-1}
\bigl(k-|X_{2i-1}|-|X_{2i}|\bigr)$
and $S_0'=\sum_{i=1}^{t-1}
\bigl(k-|X_{2i-1}'|-|X_{2i}'|\bigr)$. For
$1\leq i\leq\ell-1$, both
$X_{2i-1}\cup X_{2i}$ and $X_{2i}\cup X_{2i+1}$ are cliques. Hence $k-|X_{2i-1}|-|X_{2i}|\geq s$
and
$$
k-|X_{2i-1}|-|X_{2i}|
\geq s+|X_{2i+1}|-|X_{2i-1}|.
$$
Consequently,
$S_0\geq(q-1)s$
and
\begin{align*}
S_0
\geq(\ell-1)s+
 \sum_{i=1}^{\ell-1}
 \bigl(|X_{2i+1}|-|X_{2i-1}|\bigr)
=(\ell-1)s+|Z|-|X_1|
\geq(q-1)s+|Z|+|X_0|-w,
\end{align*}
where the last inequality follows from
$|X_0|+|X_1|\leq w$. Similarly, $S_0'\geq(q-1)s$ and $S_0'\geq(q-1)s+|Z|+|X_0'|-w$.

\begin{claim}\label{claim:common-color-set}
There is a set $B\subseteq U$ with $|B|=|Z|$ such that
$|B\cap C_0|\leq S_0$ and $|B\cap C_0'|\leq S_0'$.
\end{claim}
\begin{subproof}[\bf Proof of Claim~\ref{claim:common-color-set}]
      Let
$\delta_0=\max\{0,|X_0|-S_0\}$ and $\delta_0'=\max\{0,|X_0'|-S_0'\}$.
We shall choose a set $D\subseteq U$ of size
$k-|Y|-|Z|$ such that $|D\cap C_0|\geq\delta_0$ and $|D\cap C_0'|\geq\delta_0'$. Then $B=U\setminus D$ has the required properties.
The bounds on $S_0$ give $\delta_0\leq\max\{0,|X_0|-(q-1)s\}$
and
$\delta_0\leq\max\{0,w-|Z|-(q-1)s\}$.
The analogous inequalities hold for $\delta_0'$.

We first show that $\max\{\delta_0,\delta_0'\}\leq k-|Y|-|Z|$. If $|Z|\leq qs$, then
\begin{align*}
k-|Y|-|Z|-\bigl(|X_0|-(q-1)s\bigr)
=w-|Y|-|X_0|+qs-|Z|
\geq0,
\end{align*}
since $Y\cup X_0$ is a clique. The same argument applies to $X_0'$.
If $|Z|>qs$, then $|Y|<qs$, because $Y\cup Z$ is a clique and
$w\leq2qs$. Hence
$$k-|Y|-|Z|-\bigl(w-|Z|-(q-1)s\bigr)=qs-|Y|>0.$$
This proves $\max\{\delta_0,\delta_0'\}\leq k-|Y|-|Z|$.

We next show that $\delta_0+\delta_0'
\leq k-|Y|-|Z|+|C_0\cap C_0'|$. If one of $\delta_0,\delta_0'$ is zero, this follows immediately from
$\max\{\delta_0,\delta_0'\}\leq k-|Y|-|Z|$. Suppose that both are positive, we have $w-|Z|-(q-1)s>0$.
Therefore
$|Z|<w-(q-1)s\leq(q+1)s\leq2(q-1)s$,
where the last inequality follows from $q\geq3$. Since
$C_0,C_0'\subseteq U$,
$
|X_0|+|X_0'|-|C_0\cap C_0'|
=|C_0\cup C_0'|\leq|U|=k-|Y|.
$
Therefore, 
\begin{align*}
\delta_0+\delta_0'
\leq |X_0|+|X_0'|-2(q-1)s
<k-|Y|-|Z|+|C_0\cap C_0'|.
\end{align*}


It remains to choose $D$. If
$|C_0\cap C_0'|\geq\min\{\delta_0,\delta_0'\}$, assume by symmetry
that $\delta_0\geq\delta_0'$. 
Choose $\delta_0'$ colors from $C_0\cap C_0'$, and then choose
$\delta_0-\delta_0'$ further colors from $C_0$ that have not
already been chosen.
If
$|C_0\cap C_0'|<\min\{\delta_0,\delta_0'\}$, choose all colors in
$C_0\cap C_0'$, together with
$\delta_0-|C_0\cap C_0'|$ colors from $C_0\setminus C_0'$ and
$\delta_0'-|C_0\cap C_0'|$ colors from $C_0'\setminus C_0$.
In the first case, at most $\max\{\delta_0,\delta_0'\}$ colors are
chosen, and in the second case, at most
$\delta_0+\delta_0'-|C_0\cap C_0'|$ colors are chosen. 
By $\max\{\delta_0,\delta_0'\}\leq k-|Y|-|Z|$,
both numbers are at most $k-|Y|-|Z|$.

Since $Y\cup Z$ is a clique,
$0\leq k-|Y|-|Z|\leq|U|$. We may therefore extend the chosen set to a
set $D\subseteq U$ of size $k-|Y|-|Z|$. Let $B=U\setminus D$. Then
$|B|=|Z|$, and
\[
|B\cap C_0|
=|C_0|-|D\cap C_0|
\leq |X_0|-\delta_0
\leq S_0.
\]
Similarly, $|B\cap C_0'|\leq S_0'$. This proves
Claim~\ref{claim:common-color-set}.
\end{subproof}
  
\medskip

Apply Lemma~\ref{lem:path-extension} to the path with bags
$Y,X_0,X_1,\ldots,X_{2\ell-1}=Z$ by setting
$U_i=X_{2i-2}$ and $V_i=X_{2i-1}$ for $1\le i\le \ell$,
and 
prescribing the color set $B$ on $Z$. 
We verify that $|B\cap C_0|\leq S_0$. 
Similarly, we
apply the same lemma to the path with bags
$Y,X_0',X_1',\ldots,X_{2t-1}'=Z$ with
$U_i=X_{2i-2}'$ and $V_i=X_{2i-1}'$ for $1\le i\le t$,
and 
again prescribing $B$ on $Z$. This is possible because
$|B\cap C_0'|\leq S_0'$.

Since $Y\cup Z$ is a clique and the set
$X_0\cup\cdots\cup X_{2\ell-2}$ is anticomplete to
$X_0'\cup\cdots\cup X_{2t-2}'$,
the
two colorings together give a $k$-coloring of $H$ extending the given
coloring. This completes the proof of Lemma~\ref{lem:two-odd-holes}.
\end{proof}

\section{Proof of Theorem~\ref{thm:main}}\label{sec:main-proof}

\begin{proof}[Proof of Theorem~\ref{thm:main}]
Fix $q\geq3$. We first prove the upper bound. Suppose otherwise, and
let $G$ be a counterexample with $|V(G)|$ minimum. Write $\omega=\omega(G)$ and $s=\left\lceil\frac{\omega}{2q}\right\rceil$. Then $k=\omega+s
 =\left\lceil\frac{2q+1}{2q}\omega\right\rceil$.
Every proper induced subgraph of $G$ satisfies the assumptions of the
theorem and is therefore $k$-colorable by the minimality of $G$.
The graph $G$ is connected. Indeed, otherwise every component of $G$
is a proper induced subgraph and hence is $k$-colorable, and these
colorings together give a $k$-coloring of $G$.

We next prove that $G$ has no clique cutset. Suppose that $K$ is a
clique cutset of $G$, and let $D_1,\ldots,D_t$ be the components of
$G-K$, where $t\ge2$. For $1\le i\le t$, let
$G_i=G[K\cup D_i]$. Each $G_i$ is a proper induced subgraph of $G$,
and so $G_i$ has a $k$-coloring $f_i$.
Since $K$ is a clique, after
renaming colors in $f_i$ for each $i\ge2$, we may arrange that
$f_i(v)=f_1(v)$ for every $v\in K$.
Now define a coloring $f$ of $G$ as follows. For $v\in K$, set
$f(v)=f_1(v)$. For $v\in D_i$, set $f(v)=f_i(v)$. It is proper on
each graph $G_i$. Then $f$ is a $k$-coloring of $G$, a contradiction.

The graph $G$ has no universal vertex. Indeed, suppose that $v$ is
universal. Then $\omega(G-v)=\omega-1$, and minimality gives
$
\chi(G-v)
\leq \omega-1+
\left\lceil\frac{\omega-1}{2q}\right\rceil
\leq k-1.
$
Assigning a new color to $v$ gives a $k$-coloring of $G$, a
contradiction. Consequently, $G$ has no universal clique.

If $G$ has no hole, then $G$ is chordal and hence perfect, contrary to
$\chi(G)>k$. By Corollary~\ref{cor:structural-reduction}, $G$ is a
nonempty clique blowup of a triangle-free graph $F$ with a good ear
decomposition $F_0,F_1,\ldots,F_d=F$,
where $F_0$ is an odd hole.

Suppose first that $d=0$. Then $F=C_{2r+1}$ for some $r\geq2$.
Choosing one vertex from each nonempty bag gives an induced
$(2r+1)$-hole in $G$, and hence $r\geq q$. By
Lemma~\ref{lem:odd-cycle-blowup},
$$
\chi(G)
\leq\left\lceil\frac{2r+1}{2r}\omega\right\rceil
\leq\left\lceil\frac{2q+1}{2q}\omega\right\rceil
=k,
$$
a contradiction. Thus $d\geq1$.

Let $F_d$ be obtained from $F_{d-1}$ by adding the last good ear
$P=x_0x_1\ldots x_m$, and let $x_0,y,x_m$ be the three consecutive
vertices of the hole of $F_{d-1}$ to which $P$ is attached. The
vertex $y$ is adjacent to $x_0$ and $x_m$ and has an odd number of
neighbors in $V(P)$. It therefore has an internal neighbor on $P$.
Let $x_a$ and $x_b$ be the first two neighbors of $y$ on $P$ after
$x_0$. By their choice, $y$ has no neighbor in
$\{x_1,\ldots,x_{a-1}\}$ and no neighbor in
$\{x_{a+1},\ldots,x_{b-1}\}$. Since $P$ is induced, $yx_0x_1\ldots x_ay$ and $yx_ax_{a+1}\ldots x_by$
are induced cycles. Since $F$ is $\{$triangle, even-hole$\}$-free,
both cycles are odd holes. Hence $a=2\ell-1$ and $b-a=2t-1$
for some integers $\ell,t\geq2$. These holes have lengths
$2\ell+1$ and $2t+1$, respectively. Since $F$ is an induced subgraph
of $G$ and $G$ has no odd hole of length at most $2q-1$, we have
$\min\{\ell,t\}\geq q$.

Let $Y$ be the bag corresponding to $y$, and let $X_i$ be the bag
corresponding to $x_i$ for $0\leq i\leq b$. By the choice of $x_a$ and
$x_b$ and the fact that $P$ is induced, the graph induced by
$\{y,x_0,\ldots,x_b\}$ is exactly the union of the two odd holes above,
and their intersection is the edge $yx_a$. Therefore,
$
H=G[Y\cup X_0\cup\cdots\cup X_b]
$
is a clique blowup of two odd holes, each of length at least $2q+1$,
that intersect exactly in the edge corresponding to $yx_a$.
Let $W=X_1\cup\cdots\cup X_{b-1}$.
By minimality, $G-W$ has a $k$-coloring.
By Lemma~\ref{lem:two-odd-holes} applied to the subgraph $H$, we may extend the colouring on $Y\cup X_0\cup X_b$
to a $k$-colouring of $H$.

We claim that $W$ is anticomplete to $V(G)\setminus V(H)$. 
For $F$, every vertex $x_i$ with $1\leq i\leq b-1$ is an internal
vertex of the last ear and hence has no neighbor in
$V(F_{d-1})\setminus\{x_0,y,x_m\}$. Moreover, if $j\geq b+1$, then
$j-i\geq2$, so $x_ix_j\notin E(F)$ because $P$ is induced. If
$b<m$, this also excludes an edge from $x_i$ to $x_m$, since then
$i\leq b-1\leq m-2$; if $b=m$, the bag corresponding to $x_m$ already
lies in $H$. Thus no vertex corresponding to a bag in $W$ is adjacent
in $F$ to a vertex corresponding to a bag outside $H$. The same is
therefore true in the clique blowup $G$.

The colorings of $G-W$ and $H$ agree on their intersection
$Y\cup X_0\cup X_b$, and there are no edges between $W$ and
$V(G)\setminus V(H)$. Hence they combine to a $k$-coloring of $G$, a
contradiction. This proves the upper bound.

For sharpness, let $p$ be a positive integer, and let $C_{2q+1}^p$ be the
uniform clique blowup of $C_{2q+1}$ in which every bag has size $p$.
A hole of $C_{2q+1}^p$ contains at most one vertex from each bag. Indeed,
if it contained two vertices from the same bag, then these two
adjacent vertices would be consecutive on the hole and, being true
twins, would create a chord. 
Hence every hole of
$C_{2q+1}^p$ has length $2q+1$.

Every vertex outside such a hole has exactly three neighbors on it:
the selected vertex in its own bag and the selected vertices in the
two neighboring bags. Thus $C_{2q+1}^p$ is cap-free. It is also
even-hole-free and has no odd hole of length at most $2q-1$, so it
satisfies the assumptions of the theorem.

Finally, $\omega(C_{2q+1}^p)=2p$, and
Lemma~\ref{lem:odd-cycle-blowup} gives
$$
\chi(C_{2q+1}^p)
=\left\lceil\frac{(2q+1)p}{q}\right\rceil
=\left\lceil\frac{2q+1}{2q}\omega(C_{2q+1}^p)\right\rceil.
$$
Thus the bound is sharp for every $q\geq3$. This completes the proof
of Theorem~\ref{thm:main}.
\end{proof}

\section*{Acknowledgements}

This work was supported by the National Key R\&D Program of China
(No.~2022YFA1006400) and the National Natural Science Foundation of China
(No.~12571376).

\section*{Declaration}
\noindent$\textbf{Conflict~of~interest}$
The authors declare that they have no known competing financial interests or personal relationships that could have appeared to influence the work reported in this paper.
	
\noindent$\textbf{Data~availability}$
Data sharing not applicable to this paper as no datasets were generated or analysed during the current study.

\end{document}